\documentclass{aptpub}

%\usepackage[centertags]{amsmath}
%\usepackage{amsfonts}
%\usepackage{amssymb}
%\setlength{\textwidth}{6.25in} \setlength{\textheight}{8.5in}
%\setlength{\topmargin}{-0.2in} \oddsidemargin 0.0in
%\pagestyle{myheadings} \markright{\underline{IBM Confidential}}
%\setlength{\columnsep}{1.0cm} \setlength{\oddsidemargin}{-0.5cm}
%\setlength{\columnsep}{1.0cm} \setlength{\topmargin}{-2.0cm}
%\setlength{\textwidth}{17.8cm}
%\setlength{\textwidth}{17.4cm} \setlength{\textheight}{23.2cm}

             %% event list
               %% state space
               %%
               %% event set

               %% bold u
               %% bold n (vector)
               %% bold b (vector)
               %% bold x (vector)

               %% bold y (vector)
               %% bold z (vector)

%\newcommand{\calstoi}{{\cal S}_{\to i}}

%\newcommand{\calsito}{{\cal S}_{i\to}}

\newcommand{\al}{\alpha}                %%
\newcommand{\ga}{\gamma}                %% abbreviated
                %% greek letters
                %% greek letters
               %%
               %%
               %%
               %%
               %%
                %%
   
%\newcommand{\th}{\theta}                %%

\newcommand{\ra}{\rightarrow}           %%
           %% arrows
           %% arrows
      %%
           %% subset or equal to
        %% stochastically less than

   %% a nicer R^{-1}

  %% start an italic line
  %% start a boldface line
  %% multicolumn used in tables
        %%
        %%

\authornames{Yingdong Lu} % insert the authors here for use in running head. If three or more authors please use (for example) M.~YARROW {\it et al}. Author names should follow the same M.~YARROW format and if two authors, separate by 'AND'.
\shorttitle{Stein Method for Curie-Weiss Model} % insert short title here for use in running head

% Put any of your own definitions here.

%\numberwithin{equation}{section}  % If you number theorems, etc. within sections,
                                   % then please uncomment this line to number
                                   % equations with sections too.

\begin{document}%\recd{}{}%Do not alter this line.

\title{Approximating the Markov Chain of the Curie-Weiss Model} % insert title - use \\ if it requires more than one line.

\authorone[IBM Research]{Yingdong Lu} 
%Affiliation is just the name of your university or institution, for example 'University of Sheffield'. Author names should be of the form 'Mark Yarrow'. 
%Authors should be ordered alphabetically subject to the convention in that particular authors country. For example 'Remco van der Hofstad' would be listed under 'H' as is standard in the Netherlands. 

%Please use the following format for addresses and emails. The APT office will sort this out after you submit your files.
\addressone{1101 Kitchawan Rd, Yorktown Heights, NY 10598} % Your postal address goes here.
\emailone{yingdong@us.ibm.com} %Authors email goes here.

\begin{abstract}
In this paper, we quantify some known approximation to the Curie-Weiss model via applying the Stein method to the Markov chain whose stationary distribution coincides with Curie-Weiss model.% text of abstract goes here!
\end{abstract}

\keywords{}%insert keywords separated by a semicolon. You should avoid including keywords which also appear in the title.

\ams{}{} % insert the primary Maths Subject Classification number in the first bracket
         % and the secondary ams number(s) in the second bracket
         % e.g. \ams{60E20}{49G03;49F10}
\section{Introduction}
\label{sec:intro}

For an integer $n>0$ and real numbers $\beta\ge 0$ and $h$, the Curie-Weiss model for $n$ spins at temperature $1/\beta$ refers to a probability measure on $S^n:=\{-1, 1\}^n$, given by,
\begin{align*}
\pi(x) = Z_n \exp [ -\beta H^n(x)], \quad \forall x \in S^n, 
\end{align*}
with
\begin{align*}
H^n(x) = \frac{1}{2n} \sum_{i,j=1}^n x_i x_j - h \sum_{i=1}^n x_i, \quad \forall x \in S^n, 
\end{align*}
and $Z_n$ are the normalization coefficients. The parameter $h$  relates to external magnetization. This is an important mathematical model for studying the interaction of electron spins in real ferromagnets, and is sometimes also called the Ising model on the complete graph. For detailed physics background and thorough analysis on the Curie-Weiss model, see, e.g.~\cite{ellis2006entropy}. The system has two phases determined by the temperature. $0\le \beta <1$ is known as the {\it supercritical phase}, and $\beta=1$ is known as the {\it critical phase}. While the final results for these two phases are different, there are commonalities in analysis, and we will discuss them separately only when necessary.  

Define the one-dimensional quantity $m^n(x) :=\frac{1}{n} \sum_{i=1}^n x_i$, that is also known as the magnetization of the system. Thus, $H^n(x)$ can be rewritten in the following form,
\begin{align*}
H^n(x) = -n \left[ \frac{1}{2} (m^n(x))^2 + h m^n(x)\right].
\end{align*}
The concentration of mass happens at the minimizer of the following function, 
\begin{align*}
i(m) = -\left(\frac12 \beta (m)^2 + \beta h m \right) + \frac{1-m}{2} \log (1-m) + \frac{1+m}{2} \log (1+m).
\end{align*}
It is given by,
\begin{align}
\label{eqn:basic_relation} 
\beta m_0 +\beta h = \frac12 \log \frac{1+m_0}{1-m_0},
\end{align}
or equivalently, $m_0 =\tanh (\beta (m_0+ h))$.

In certain parameter cases, it is observed that, see e.g. ~\cite{bierkens2017}, under proper scaling, the Markov chain that produces the density can converge to diffusion processes. Hence, the stationary distribution of the diffusion process can be viewed as a good approximation of the Curie-Weiss model. Note that the key to this approximation is the concentration effect proved, see, e.g.~\cite{chatterjee2010, chatterjee2011}. They have been proved for a special case of critical system and for supercritical cases. For other cases, the concentration is known to be more difficult, and will be part of our future research.

In this paper, we aim to quantify the discrepancy in their stationary distributions via applying Stein method. In the  analysis, we will rely on a detailed analysis of the infinitesimal operators of the two processes. While process level approximation can be achieved by a semigroup expansion method as shown in cite{princeton}. More detailed regularity analysis for the solution to the Poisson equation related to the generator is required for the Stein method. More specifically, our approach is developing a Stein method approximation through a Markov chain generated by the Metropolis-Hasting method in simulation. The stationary distribution of this Markov chain is $\pi_n$. Meanwhile, $\pi_\infty$ will be the stationary distribution of a diffusion process $Y(t)$, and is also denoted by $Y(\infty)$.  To estimate the difference between $\pi_n$ and $Y(\infty)$ in the weak sense, we need to quantify the values of $|E[h(\pi_n)]-E[h(Y_\infty)]|$ for an arbitrary bounded function. The key of the Stein method is to reduce the estimation of this quantity to that of  $|E[G(f_h(Y_\infty)]-E[G_n(f_h(\pi_n)]|$, with $f_h$ being the solution to the Stain equation with respect to $h$, with the hope that the structural properties of solution will reveal more information that can aid the calculation of the quantities. 

While better result on the distribution function were obtained in Chaterjee using techniques that can not be easily generalized, the results in this paper is focus on general function evaluated at the stationary distribution. Meanwhile, similar method was used by Braverman and Dai~\cite{braverman2017}and Gurvich~\cite{gurvich2014,doi:10.1287/moor.2013.0593} for stocahstic processing system and networks. 

In the rest of paper, we will first introduce the Markov chain in Sec. \ref{sec:MC}; iwe will then discuss the Stein method and present the results on approximating the stationary distributions in Sec. \ref{sec:stein}. 

\section{The Markov Chain}
\label{sec:MC}

In this section, we present the Markov chain whose stationary distribution is the Curie-Weiss model, then write out the scaled and centered version and introduce some of its basic properties. 

\subsection{The Definition of the Markov Chain} 

First, let $X^n_t$ denote a Markov chain whose state space is $S^n$, and its transition probability is given by, 
\begin{align}
\label{eqn:MC-Transition-orig}
P_{x,y} = \left\{ \begin{array}{cc} \frac{1}{n} & y \in N_0(x) \\ 0 & y \notin N_0(x) \end{array} \right. 
\end{align}
where $N_0(x) =\cup_{k=1}^n \{ y \in S^n , y \neq x , y-x = \pm 2e^k \}$. The Markov chain will flip one of its coordinates with probability $1/n$.  

%Define a map $\eta: S^n \rightarrow {\mathbb R}$, $\eta(x) := n^\gamma (m^n(x) -m_0)$ with $m^n(x): = (1/n) \sum_{k=1}^n x_k$ and $m_0$ as given before. 

%$Z_t^n=\eta(X_t)$ is a Markov chain that is governed by 
%\begin{align}
%\label{eqn:MC-Transition-scaled}
%Q_{\eta, \eta\pm 2n^{\gamma -1} } = \frac12 (1\mp (m_0 + n^{-r} \eta)).
%\end{align}
%for all $\eta \in [n^{\gamma} (-1-m_0), n^{\gamma} (1-m_0)]$.

Let $Y_t^n=\eta^n(X_t)$, where,
\begin{align*}
m^n(x): = (1/n) \sum_{k=1}^n x_k, \quad 
\eta^n (x) := n^{\gamma} (m^n(x) -m_0),
\end{align*}
which is the centralized and renormalized magnetization. Its (one-step) transition rate is characterized by,
\begin{align*}
Q^n(\eta^n, \eta^n+2n^{\gamma-1}) &= \frac12[1-(m_0+ n^{-\gamma} \eta^n)],\\ Q^n(\eta^n, \eta^n-2n^{\gamma-1}) &= \frac12[1+(m_0+ n^{-\gamma} \eta^n)].
\end{align*}
Finally, $Z_t^n$, the Metropolis-Hasting version of this Markov chain, is the one with the following transition rates, 
\begin{align*}
P^n( \eta^n, \eta^n \pm 2n^{\gamma-1}) = Q^n(\eta^n, \eta^n \pm 2n^{\gamma-1})(1\wedge \exp [\beta (\Phi^n(\eta) - \Phi^n(\eta \mp 2n^{\gamma -1}))],
\end{align*}
with
\begin{align*}
\Phi^n(\eta) = -\frac12 n^{1-2\gamma} \eta^2 -n^{1-\gamma} (m_0+h)\eta.
\end{align*}
Define, 
\begin{align*}
p_{\pm}^n (\eta) : &= P^n (\eta, \eta\pm 2n^{\gamma -1})\\
& = \left[\frac12   [1\mp (m_0 + \frac{\eta}{n\delta})\right]\left\{1\wedge \exp\left[\pm \beta \left(\frac{\eta}{n \delta } \pm 2m_0\pm 2h + \frac{2}{n} \right)\right] \right\}.
\end{align*}
Furthermore, there will be a time scaling, or speed up of the Markov chain, that is quantified by the number $\alpha$. The relationship under which a meaningful process limit can be established has been identified in~\cite{bierkens2017}. More specifically $\al =1-\ga$.

\subsection{Estimating Some Basic Quantities of the Markov Chain}

The concentration of the Curie Weiss, a known result from Chatterjee, determined that $\gamma=\frac12$ for $0\le \beta <1$ and $\gamma =\frac{1}{4}$ for $\beta=1$. More specifically, we know that,

\noindent
{\bf Lemma}[Chatterjee]
\begin{itemize}
	\item[1.] For all $\beta \ge 0$, $h \in {\mathbb R}$, we have, for any $t\ge 0$,
	\begin{align}
	\label{eqn:tail_one} 
	\pi^n\left( |m^n -\tanh (\beta (m^n+h))| \ge \frac{\beta}{n}+\frac{t}{\sqrt{n}}\right)\le 2 \exp \left(-\frac{t^2}{4(1+\beta)}\right).
	\end{align}
	\item[2.] For $h=0$ and $\beta=1$, we have, 
	\begin{align}
	\label{eqn:tail_two} 
	\pi^n(|m^n|\ge t^{1/4} ) \le 2e^{-cnt}
	\end{align}
\end{itemize}

We can use the tail distribution estimation to guarantee the boundedness of the moments, which will be useful later.

\begin{lem}
	\label{lem:finite_moments}
	The finiteness of the moments. More specifically, there exists an constant $C_m>0$, such that,
	\begin{align}
	\label{eqn:moments}
	E[Z^n] \le C_m^n n!!,
	\end{align}
where $n!!$ denotes the {\it double factorial} of $n$, i.e. the product of all the integers from $1$ to $n$ that have the same parity as $n$. 
\end{lem}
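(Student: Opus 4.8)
The plan is to obtain the moment bound directly from the sub-Gaussian concentration recorded in Chatterjee's Lemma by means of the layer-cake (tail-integration) formula. Writing $Z$ for the stationary variable $\eta^n=n^{\gamma}(m^n-m_0)$ distributed according to $\pi^n$, so that $E[Z^n]$ is its $n$-th moment, I would start from
\begin{align*}
E[|Z|^n]=\int_0^\infty n\,s^{n-1}\,\pi^n(|Z|\ge s)\,ds,
\end{align*}
reducing the whole problem to inserting a good tail estimate for $|Z|$ and evaluating a one-dimensional integral.

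The first task is to convert the tail bound \eqref{eqn:tail_one}, stated for the nonlinear functional $g(m):=m-\tanh(\beta(m+h))$, into a genuine tail for $|m^n-m_0|$. In the supercritical regime $0\le\beta<1$ this is clean: since $g'(m)=1-\beta\,\mathrm{sech}^2(\beta(m+h))\ge 1-\beta>0$ for every $m$, the map $g$ is strictly increasing with $g(m_0)=0$, whence $|m^n-m_0|\le |g(m^n)|/(1-\beta)$ pointwise. Substituting $s=n^{\gamma}r$ with $\gamma=\tfrac12$ and $(1-\beta)\,r=\beta/n+t/\sqrt n$ turns \eqref{eqn:tail_one} into an estimate of the form $\pi^n(|Z|\ge s)\le 2\exp(-c\,s^2)$, valid for $s\ge s_0$ with a constant $c>0$ independent of the number of spins; for $s<s_0$ the trivial bound $\pi^n(|Z|\ge s)\le 1$ suffices. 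In the critical case $\beta=1,\ h=0$ one has $m_0=0$ and $g'(0)=0$, so the linearization degenerates; here I would instead feed the sharper estimate \eqref{eqn:tail_two} into the same scheme, which after the substitution $t=s^4/n$ (using $\gamma=\tfrac14$) yields the even lighter tail $\pi^n(|Z|\ge s)\le 2e^{-c s^4}$.

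With the sub-Gaussian tail in hand, the integral collapses to a Gamma function, since $\int_0^\infty s^{n-1}e^{-cs^2}\,ds=\tfrac12 c^{-n/2}\Gamma(n/2)$, giving $E[|Z|^n]\le n\,c^{-n/2}\Gamma(n/2)+s_0^{\,n}$ after splitting the integral at $s_0$. It then remains to dominate $\Gamma(n/2)$ by a double factorial: using $(2j)!!=2^j j!$ and $(2j+1)!!=(2j+1)!/(2^j j!)$ together with Stirling's formula one checks that $\Gamma(n/2)\le \tilde C^{\,n}\,n!!$ for an absolute constant $\tilde C$, and absorbing the polynomial factor $n$, the threshold term $s_0^{\,n}$, and the powers $c^{-1/2},\tilde C$ into a single base produces the claimed $E[Z^n]\le C_m^{\,n}\,n!!$. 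The critical case is only easier, the $e^{-cs^4}$ tail producing $\Gamma(n/4)$ in place of $\Gamma(n/2)$.

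The step I expect to be the crux is the conversion in the second paragraph: everything downstream is a routine Gaussian-integral and Stirling computation, whereas passing from Chatterjee's concentration of $g(m^n)$ to a clean, spin-count-uniform sub-Gaussian tail for the rescaled variable $Z$ is where the phase-dependent structure of the model actually enters. One must verify both the global lower bound $g'\ge 1-\beta$ (so that no contribution from $m^n$ far from $m_0$ is lost) in the supercritical phase and the correct matching of exponents in the critical phase, and confirm that the constants $c$ and hence $C_m$ can be taken independent of $n$.
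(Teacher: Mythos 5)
Your proposal is correct and follows essentially the same route as the paper: deduce from Chatterjee's concentration inequalities a tail bound for $|\eta|$ that is uniform in $n$ (quartic-exponential in the critical case, Gaussian in the supercritical case) and integrate it to control the moments. You actually supply two steps the paper leaves implicit — the monotonicity argument $|m^n-m_0|\le|m^n-\tanh(\beta(m^n+h))|/(1-\beta)$ needed before \eqref{eqn:tail_one} can be applied to $|m^n-m_0|$ (the paper substitutes as if the bound applied directly), and the explicit layer-cake/Gamma-function computation yielding the $C_m^n\,n!!$ form — so your write-up is, if anything, more complete than the one in the paper.
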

\begin{proof}
	We will discuss the {\it critical} and {\it supercritical} cases separately. 
	
	For the critical case, $\eta> n^{\delta}$, we know that, 
	\begin{align*}
	\pi[|\eta|\ge n^{\delta}]=\pi_n[|m^n|\ge n^{\delta-\frac{1}{4}}] \le \exp (-cn^{1+\delta-\frac{1}{4}}).
	\end{align*}
	%we can have,
	%\begin{align*}
	%\ex\left[\frac{\eta^4}{n^{\frac{1}{4}}}\right] \le \frac{1}{n^{\frac{1}{4}}} \left\{\ex\left[\eta^4{\bf 1} \{ \eta \le n^{\delta}\}\right]+\ex\left[\eta^4{\bf 1} \{ \eta > n^{\delta}\}\right]\right\}
	%\end{align*}
	%The key is the second term. 
	For any integer $k$, we have,
	\begin{align*}
	\pi[|\eta|\ge k]=\pi_n\left[|m^n|\ge \frac{k}{n^{1/4}} \right] = \pi_n\left[|m^n|\ge \left(\frac{k^4}{n}\right)^{1/4} \right]\le e^{-ck^4},
	\end{align*}
	The inequality comes from inequality \eqref{eqn:tail_two} with $t= \frac{k^4}{n}$. This will ensure the finiteness of the moments. 
	
	For the supercritical case, again, for any integer $k$, we have,
	\begin{align*}
	\pi[|\eta|\ge k]&= \pi^n(|m^n-m_0| \ge \frac{k}{\sqrt{n}})\\ &=\pi^n\left(|m^n-m_0| \ge \frac{\beta}{n} + \frac{k-\frac{\beta}{\sqrt{n}}}{\sqrt{n}}\right)\\ &\le
	2\exp\left(-\frac{(k-\frac{\beta}{\sqrt{n}})^2}{4(1+\beta)}\right).
	\end{align*}
	This implies the finiteness of the moment estimate. 
\end{proof}
\begin{rem}
The bound is not necessary optimal, just suffice for our purpose in this paper. 
\end{rem}

The dynamics of the Markov chain discussed above indicates that the quantities $[p^n_+ (\eta) +p^n_-(\eta)] $ and $[p^n_+ (\eta) -p^n_-(\eta)] $ play prominent roles in the quantification of the process. In the following, we provide a detailed analysis on these two quantities. 
\begin{lem}
	\label{lem:the_plus_calculation}
	The calculation of 
	\begin{align}
	\nonumber [p^n_+ (\eta) +p^n_-(\eta)] = &\frac12  [ 1- m_0] \left\{1+\exp\left[\beta \left(2\frac{\eta}{n \delta } + \frac{2}{n} \right)\right]\right\} \\ & -  \frac12   \frac{\eta}{n\delta} \left\{1- \frac{1-m_0}{1+m_0}\exp\left[\beta \left(2\frac{\eta}{n \delta }  + \frac{2}{n} \right)\right] \right\}
	\label{eqn:the_plus_calculation}
	\end{align}
\end{lem}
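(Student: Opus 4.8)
The plan is to start from the explicit formula for the two acceptance probabilities $p_+^n(\eta)$ and $p_-^n(\eta)$, to collapse each minimum $1\wedge\exp[\,\cdot\,]$ to a single branch, to eliminate the magnetization-dependent exponential through the fixed-point identity \eqref{eqn:basic_relation}, and then to collect terms into the two brackets of \eqref{eqn:the_plus_calculation}. Accordingly, I would first write the exponent of $p_\pm^n(\eta)$ in the form $\beta\big(2\frac{\eta}{n^\delta}+\frac2n\big)\pm 2\beta(m_0+h)$, with the upper sign for $p_+^n$. Since \eqref{eqn:basic_relation} gives $\exp[2\beta(m_0+h)]=\frac{1+m_0}{1-m_0}>1$ (for $m_0>0$), the constant $2\beta(m_0+h)$ is bounded away from zero while $2\frac{\eta}{n^\delta}$ and $\frac2n$ are small; hence the exponent of $p_+^n$ is nonnegative and its minimum equals $1$, whereas that of $p_-^n$ is nonpositive and its minimum equals the exponential. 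This reduces the two probabilities to
\begin{align*}
p_+^n(\eta)&=\frac12\Big(1-m_0-\frac{\eta}{n^\delta}\Big),\\
p_-^n(\eta)&=\frac12\Big(1+m_0+\frac{\eta}{n^\delta}\Big)\exp\Big[\beta\Big(2\frac{\eta}{n^\delta}+\frac2n\Big)-2\beta(m_0+h)\Big].
\end{align*}

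Second, I would insert \eqref{eqn:basic_relation} into the surviving exponential, replacing $\exp[-2\beta(m_0+h)]$ by $\frac{1-m_0}{1+m_0}$, so that $p_-^n(\eta)=\frac12\big(1+m_0+\frac{\eta}{n^\delta}\big)\frac{1-m_0}{1+m_0}\exp\big[\beta(2\frac{\eta}{n^\delta}+\frac2n)\big]$. This is the decisive step: it converts the opaque dependence on $m_0$ and $h$ into the rational prefactor $\frac{1-m_0}{1+m_0}$ that appears in the statement, and it is exactly the place where the defining equation of the concentration point enters.

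Third, I would add $p_+^n$ and $p_-^n$ and regroup. The non-exponential contribution is $\frac12(1-m_0)-\frac12\frac{\eta}{n^\delta}$, coming entirely from $p_+^n$; the exponential contribution, coming entirely from $p_-^n$, splits after writing $1+m_0+\frac{\eta}{n^\delta}=(1+m_0)+\frac{\eta}{n^\delta}$ into $\frac12(1-m_0)\exp[\,\cdot\,]$ and $\frac12\frac{\eta}{n^\delta}\frac{1-m_0}{1+m_0}\exp[\,\cdot\,]$. Collecting the two $(1-m_0)$-terms into $\frac12(1-m_0)\{1+\exp[\,\cdot\,]\}$ and the two $\frac{\eta}{n^\delta}$-terms into $-\frac12\frac{\eta}{n^\delta}\{1-\frac{1-m_0}{1+m_0}\exp[\,\cdot\,]\}$ reproduces \eqref{eqn:the_plus_calculation}.

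The main obstacle I anticipate is the branch selection of the first step: the clean identity only appears once one is certain which side of each minimum is active, which rests on the sign of the exponents, hence on the smallness of $2\frac{\eta}{n^\delta}$ and $\frac2n$ relative to $2\beta(m_0+h)$. The sign bookkeeping of the $\pm 2\beta(m_0+h)$ term is equally delicate: it must come out so that the fixed-point substitution yields $\frac{1-m_0}{1+m_0}$ and not its reciprocal. Finally, the critical case $m_0=0$, where $2\beta(m_0+h)=0$, needs separate handling through the sign of $2\frac{\eta}{n^\delta}+\frac2n$, although there $\frac{1-m_0}{1+m_0}=1$ and the bracketed identity is unchanged. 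Once these sign issues are settled, what remains is routine algebra.
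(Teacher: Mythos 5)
Your proposal is correct and follows essentially the same route as the paper: collapse each $1\wedge\exp[\cdot]$ to a single branch using the sign of $2\beta(m_0+h)$ relative to the $O(\tfrac{\eta}{n\delta})$ terms, substitute $\exp[-2\beta(m_0+h)]=\tfrac{1-m_0}{1+m_0}$ from \eqref{eqn:basic_relation}, and regroup into the $(1-m_0)$ and $\tfrac{\eta}{n\delta}$ brackets. If anything, you are more explicit than the paper about the branch selection and the degenerate case $m_0=0$, which the paper only addresses in the proof of Lemma \ref{lem:the_minus_calculation}.
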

\begin{proof}
	The calculations goes as the following, 
	\begin{align*}
	& [p^n_+ (\eta) +p^n_-(\eta)] 
	\\ =&\frac12  \left[ 1- \left(m_0 + \frac{\eta}{n\delta}\right)\right] + \frac12  \left[ 1+ \left(m_0 + \frac{\eta}{n\delta}\right)\right] \exp\left[\beta \left(2\frac{\eta}{n \delta } - 2m_0- 2h + \frac{2}{n} \right)\right] 
	\\ =& \frac12  [ 1- m_0] + \frac12  [ 1+ m_0] \exp[-2\beta(m_0+h)]\exp\left[\beta \left(2\frac{\eta}{n \delta } + \frac{2}{n} \right)\right] \\ & -  \frac12   \frac{\eta}{n\delta} \left\{1- \exp\left[\beta \left(2\frac{\eta}{n \delta } - 2m_0- 2h + \frac{2}{n} \right)\right] \right\}
	\\=& \frac12  [ 1- m_0] \left\{1+\exp\left[\beta \left(2\frac{\eta}{n \delta } + \frac{2}{n} \right)\right]\right\} -\frac12   \frac{\eta}{n\delta} \left\{1- \exp\left[\beta \left(2\frac{\eta}{n \delta } - 2m_0- 2h + \frac{2}{n} \right)\right] \right\}
	\\=& \frac12  [ 1- m_0] \left\{1+\exp\left[\beta \left(2\frac{\eta}{n \delta } + \frac{2}{n} \right)\right]\right\}  -  \frac12   \frac{\eta}{n\delta} \left\{1- \frac{1-m_0}{1+m_0}\exp\left[\beta \left(2\frac{\eta}{n \delta }  + \frac{2}{n} \right)\right] \right\}.
	\end{align*}
	The first equation is just simple algebraic manipulation, and the second and third equations used the basic relationship in \eqref{eqn:basic_relation}.
	% 
	%\begin{align*} 
	%\beta m_0 +\beta h = \frac12 \log \frac{1+m_0}{1-m_0},
	%\end{align*}
\end{proof}
Expand the exponential function gives us the following order estimation.
\begin{cor}
	\label{cor:the_plus_calculation} We have the following evaluations on terms of the expression in \eqref{eqn:the_plus_calculation},
	\begin{itemize}
		\item[i]
		The $0$-th order term in the expression: $(1-m_0)$.
		\item[ii]
		The $(n\delta)^{-1}$ term, 	$E[\eta] \left[ \beta(1-m_0) -\frac{m_0}{1+m_0}\right].$
		\begin{itemize}
			\item
			Recall that, in the critical case, we have, $\delta = n^{-3/4}$, so $(n\delta)^{-1}$ terms is $n^{-1/4}$,
			\item
			In the supercritical system,  $\delta = n^{-1/2}$, so $(n\delta)^{-1}$ terms is $n^{-1/2}$.
		\end{itemize}
	\item[iii]
	Everything else will be higher order. 
	\end{itemize}
In other words, we have, $p^n_+ (\eta) +p^n_-(\eta) =(1-m_0)+E_1(\eta)$, with $E_1(\eta)= \eta \left[ \beta(1-m_0) -\frac{m_0}{1+m_0}\right](n\delta)^{-1} + O((n\delta)^{-2})$. 
\end{cor}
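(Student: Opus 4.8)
The plan is to read \eqref{eqn:the_plus_calculation} as a function of the single small parameter $(n\delta)^{-1}$ and to Taylor expand the two occurrences of the exponential. In both regimes one has $n\delta = n^{\gamma}$ (that is, $n^{1/4}$ in the critical case and $n^{1/2}$ in the supercritical case), so the exponent $\beta(2\eta/(n\delta)+2/n)$ is itself of order $(n\delta)^{-1}$ once $\eta$ is held fixed. First I would write $\exp[\beta(2\eta/(n\delta)+2/n)] = 1 + \beta(2\eta/(n\delta)+2/n) + O((n\delta)^{-2})$ and note that the genuine $1/n$ piece is absorbed into the remainder, since $1/n = o((n\delta)^{-1})$ for both admissible exponents $\gamma<1$.

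For item (i), substituting the leading value $1$ of the exponential into the first brace of \eqref{eqn:the_plus_calculation} gives $\frac12(1-m_0)\{1+1\} = (1-m_0)$, while the second brace carries the explicit prefactor $\eta/(n\delta)$ and hence contributes nothing at zeroth order. For item (ii) I would collect the two $(n\delta)^{-1}$ contributions: the linear term of the exponential in the first brace yields $\frac12(1-m_0)\cdot 2\beta\eta/(n\delta) = (1-m_0)\beta\,\eta/(n\delta)$, and in the second brace it suffices to replace the exponential by $1$ (any further term is already $O((n\delta)^{-2})$ because of the prefactor), giving $-\frac12\frac{\eta}{n\delta}(1-\frac{1-m_0}{1+m_0}) = -\frac{\eta}{n\delta}\frac{m_0}{1+m_0}$. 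Adding the two produces the coefficient $\eta[\beta(1-m_0)-\frac{m_0}{1+m_0}]$, and everything not yet accounted for is swept into $O((n\delta)^{-2})$, which is item (iii) and the closing display for $E_1(\eta)$.

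The algebra above is routine; the step I expect to require genuine care is justifying that the remainder is uniformly of higher order in a way that survives taking expectations. Because $\eta$ is unbounded, the neglected terms involve higher powers of $\eta/(n\delta)$, so the bound $O((n\delta)^{-2})$ is only meaningful in an averaged sense --- which is precisely why item (ii) is phrased in terms of $E[\eta]$. Here I would invoke Lemma \ref{lem:finite_moments} to control $E[\eta^k]$ for the relevant $k$, so that after integrating against $\pi_n$ the contributions of the higher-order terms are indeed of order $(n\delta)^{-2}$. Making this interchange rigorous, rather than the Taylor coefficient bookkeeping, is the main obstacle.
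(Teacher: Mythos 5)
Your proposal is correct and follows essentially the same route as the paper: the paper's (very terse) proof likewise obtains (i) and (ii) by expanding the exponential in \eqref{eqn:the_plus_calculation} and disposes of the higher-order terms in (iii) by appealing to Lemma \ref{lem:finite_moments}. Your write-up simply makes explicit the coefficient bookkeeping and the point that the remainder bound is only meaningful after taking expectations, both of which the paper leaves implicit.
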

\begin{proof}
	Both (i) and (ii) are straightforward, and (iii) follows from Lemma \ref{lem:finite_moments}.
\end{proof}

\begin{lem}
	\label{lem:the_minus_calculation}
	The calculation of $[p^n_+ (\eta) -p^n_-(\eta)] $. 
	\begin{align}
	\nonumber p^n_+ (\eta) -p^n_-(\eta)=& \frac12  (1- m_0) \left\{1-\exp\left[\beta \left(2\frac{\eta}{n \delta } + \frac{2}{n} \right)\right]\right\} \\ & -  \frac12   \frac{\eta}{n\delta} \left\{1+\frac{1-m_0}{1+m_0} \exp\left[\beta \left(2\frac{\eta}{n \delta } + \frac{2}{n} \right)\right] \right\} \label{eqn:the_minus_calculation}
	\end{align}
\end{lem}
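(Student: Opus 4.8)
The plan is to mirror the computation already carried out in Lemma \ref{lem:the_plus_calculation}, since the expression for $p^n_+(\eta) - p^n_-(\eta)$ differs from the sum only in the signs attached to the two branches. First I would expand each of $p^n_+(\eta)$ and $p^n_-(\eta)$ using the definition of $p^n_\pm(\eta)$ given earlier, namely
\begin{align*}
p^n_\pm(\eta) = \tfrac12\left[1\mp\left(m_0+\tfrac{\eta}{n\delta}\right)\right]\left\{1\wedge\exp\left[\pm\beta\left(\tfrac{\eta}{n\delta}\pm 2m_0\pm 2h+\tfrac{2}{n}\right)\right]\right\}.
\end{align*}
As in the previous lemma, I would work in the regime where the relevant minimum $1\wedge(\cdot)$ selects the nontrivial exponential branch, so that $p^n_+$ contributes its prefactor outright and $p^n_-$ carries the exponential factor $\exp[\beta(2\tfrac{\eta}{n\delta}-2m_0-2h+\tfrac{2}{n})]$. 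Subtracting rather than adding, the two leading prefactor terms combine as $\tfrac12(1-m_0)\{1-\exp[\cdots]\}$ instead of the $\{1+\exp[\cdots]\}$ that appeared before.

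The key algebraic step, exactly as in Lemma \ref{lem:the_plus_calculation}, is to isolate the factor $\exp[-2\beta(m_0+h)]$ from the exponential and then invoke the basic relation \eqref{eqn:basic_relation}, equivalently $m_0=\tanh(\beta(m_0+h))$. From \eqref{eqn:basic_relation} one has $\exp[-2\beta(m_0+h)] = \tfrac{1-m_0}{1+m_0}$, which is precisely what converts the raw exponential coefficients into the clean ratio $\tfrac{1-m_0}{1+m_0}$ appearing in \eqref{eqn:the_minus_calculation}. After substituting this identity, the $\tfrac12(1+m_0)\exp[-2\beta(m_0+h)]$ factor collapses to $\tfrac12(1-m_0)$, giving the first bracketed term, while the $\tfrac12\tfrac{\eta}{n\delta}$ contributions collect into the second bracket with the sign $\{1+\tfrac{1-m_0}{1+m_0}\exp[\cdots]\}$. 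The sign difference relative to the plus-calculation is the only substantive change: subtraction flips the sign in the first brace from $+$ to $-$ and in the second brace from $-$ to $+$.

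I would organize the display as a chain of equalities identical in structure to the proof of Lemma \ref{lem:the_plus_calculation}: first pure algebra separating the two branches, then extraction of $\exp[-2\beta(m_0+h)]$, then application of \eqref{eqn:basic_relation} to arrive at \eqref{eqn:the_minus_calculation}. I do not anticipate a genuine obstacle here, as the manipulation is routine and parallels the established computation; the one place to be careful is bookkeeping the signs, since they are what distinguish this lemma from the previous one and a single sign error would propagate to the wrong final brace. It would also be worth noting, as a companion to Corollary \ref{cor:the_plus_calculation}, that expanding the exponential shows the leading behavior of $p^n_+(\eta)-p^n_-(\eta)$ is $O((n\delta)^{-1})$ rather than $O(1)$, since the two $O(1)$ prefactors cancel in the first brace when the exponential is linearized; this drift-versus-diffusion scaling is exactly what one expects for the centered magnetization and is the reason both quantities are singled out for analysis.
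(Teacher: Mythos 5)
Your proposal is correct and follows essentially the same route as the paper: restrict to the branch of the minimum where $p^n_+$ keeps its prefactor and $p^n_-$ carries the exponential (the paper does this explicitly under the assumption $2m_0+2h>0$, remarking the opposite sign is analogous), then apply $\exp[-2\beta(m_0+h)]=\tfrac{1-m_0}{1+m_0}$ from \eqref{eqn:basic_relation} and track the sign flips relative to Lemma~\ref{lem:the_plus_calculation}. No substantive difference.
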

\begin{proof}
	Recall
	\begin{align*}
	p_{\pm}^n (\eta) 
	& = \frac12  \left[ 1\mp \left(m_0 + \frac{\eta}{n\delta}\right)\right]\left\{1\wedge \exp\left[ \beta \left(2\frac{\eta}{n \delta } \pm 2m_0\pm 2h + \frac{2}{n} \right)\right] \right\}.
	\end{align*}
	When $2m_0+ 2h>0$, then, for large enough $n$,  $2\frac{\eta}{n \delta } + 2m_0+ 2h + \frac{2}{n} \ge 0$, and  $2\frac{\eta}{n \delta } - 2m_0 - 2h + \frac{2}{n} \le 0$, we know that, 
	\begin{align*}
	& [p^n_+ (\eta) - p^n_-(\eta)] \\ =&\frac12  \left[ 1- \left(m_0 + \frac{\eta}{n\delta}\right)\right] -  \frac12  \left[ 1+ \left(m_0 + \frac{\eta}{n\delta}\right)\right] \exp\left[\beta \left(2\frac{\eta}{n \delta } - 2m_0- 2h + \frac{2}{n} \right)\right] 
	%\\ =& \frac12 \left\{ 1-  \exp\left[\beta \left(2\frac{\eta}{n \delta } - 2m_0- 2h + \frac{2}{n} \right)\right] \right\} \\ &- \left(m_0 + \frac{\eta}{n\delta}\right)\left\{ 1+  \exp\left[\beta \left(2\frac{\eta}{n \delta } - 2m_0- 2h + \frac{2}{n} \right)\right] \right\}.
	\end{align*}
	Meanwhile, we know that $$\exp[\beta(m_0+h)] = \left(\frac{1+m_0}{1-m_0}\right)^{\frac12}.$$ 
	So,
	\begin{align*}
	& [p^n_+ (\eta) - p^n_-(\eta)] \\ =&\frac12  \left[ 1- \left(m_0 + \frac{\eta}{n\delta}\right)\right] -  \frac12  \left[ 1+ \left(m_0 + \frac{\eta}{n\delta}\right)\right] \exp\left[\beta \left(2\frac{\eta}{n \delta } - 2m_0- 2h + \frac{2}{n} \right)\right] 
	\\ =& \frac12  (1- m_0) - \frac12  (1+ m_0) \exp[-2\beta(m_0+h)]\exp\left[\beta \left(2\frac{\eta}{n \delta } + \frac{2}{n} \right)\right] \\ & -  \frac12   \frac{\eta}{n\delta} \left\{1+ \exp\left[\beta \left(2\frac{\eta}{n \delta } - 2m_0- 2h + \frac{2}{n} \right)\right] \right\}\\=& \frac12  (1- m_0) \left\{1-\exp\left[\beta \left(2\frac{\eta}{n \delta } + \frac{2}{n} \right)\right]\right\}  -  \frac12   \frac{\eta}{n\delta} \left\{1+ \exp\left[\beta \left(2\frac{\eta}{n \delta } - 2m_0- 2h + \frac{2}{n} \right)\right] \right\} \\=& \frac12  (1- m_0) \left\{1-\exp\left[\beta \left(2\frac{\eta}{n \delta } + \frac{2}{n} \right)\right]\right\} -  \frac12   \frac{\eta}{n\delta} \left\{1+\frac{1-m_0}{1+m_0} \exp\left[\beta \left(2\frac{\eta}{n \delta } + \frac{2}{n} \right)\right] \right\} 
	\end{align*}
	The case of $2m_0+ 2h<0$ can be similarly considered. 
\end{proof}
Similar to Corollary \ref{cor:the_plus_calculation}, we have, 
\begin{cor}
	\label{cor:the_minus_calculation}
	The terms of \eqref{eqn:the_minus_calculation} can be evaluated as,
	\begin{itemize}
		\item 
		In the case of $h=0$ and $\beta=1$, we have, $m_0=0$, with $\al=3/4$ and $\gamma =1/4$, 
		$$2n^{\frac{3}{4}} [p^n_+ (\eta) - p^n_-(\eta)]=\left[ -\frac{2}{3} n^{-\frac{3}{4}}\eta^3 + \frac{4}{3}n^{-1} \eta^4 \right]n^{\frac{3}{4}}.$$
		\item
		In the supercritical case
		\begin{align*}
		&-( 1- m_0)\eta \left[\beta  +  \frac{1}{2(1+m_0)}\right]\frac{1}{n^\frac12} - ( 1- m_0)\beta\left[(1 +\eta^2 \beta) -\frac{ \eta^2}{1+m_0}\right] \frac{1}{n} 
		\\ &- (1- m_0)\beta \eta \left[2\beta     -\frac{ (1 +\eta^2 \beta)}{1+m_0} \right] \frac{1}{n^{\frac{3}{2}}} \end{align*}
	\end{itemize}
In other words, we have, $n^{\frac12}[p^n_- (\eta) +p^n_-(\eta)] =-( 1- m_0)\eta \left[\beta  +  \frac{1}{2(1+m_0)}\right]+E_2(\eta)$, with $E_2(\eta)=O(\frac{1}{n^\frac12})$.
\end{cor}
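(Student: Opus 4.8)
The plan is to mirror the argument already used for Corollary \ref{cor:the_plus_calculation}: start from the closed form \eqref{eqn:the_minus_calculation} supplied by Lemma \ref{lem:the_minus_calculation}, expand the single exponential $\exp[\beta(2\frac{\eta}{n\delta} + \frac2n)]$ as a power series in the small parameter, and then read off the terms order by order. Writing $\epsilon = \frac{\eta}{n\delta} = \eta n^{-\gamma}$ and $A = \beta(2\epsilon + \frac2n)$, I would substitute $\exp(A) = 1 + A + \frac{A^2}{2} + \frac{A^3}{6} + \cdots$ into both braces of \eqref{eqn:the_minus_calculation}, so that $1 - \exp(A)$ contributes only positive powers of $A$ while $1 + \frac{1-m_0}{1+m_0}\exp(A)$ contributes a constant plus positive powers of $A$. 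Collecting by powers of $n^{-\gamma}$, together with the genuinely smaller $n^{-1}$ contributions, then produces a polynomial in $\eta$ whose coefficients are explicit functions of $m_0$ and $\beta$.

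Next I would specialize to the two regimes by substituting their parameter values. In the supercritical case $\gamma = \frac12$ (so $n\delta = n^{1/2}$) and $m_0 = \tanh(\beta(m_0+h))$; here the leading $\epsilon$-term survives, and after multiplying by the time scaling $n^{1/2}$ it produces the stated coefficient $-(1-m_0)\eta[\beta + \frac{1}{2(1+m_0)}]$, with the $A^2$ and $A^3$ contributions furnishing the $n^{-1}$ and $n^{-3/2}$ terms. In the critical case the parameters $m_0 = 0$ and $\beta = 1$ force cancellation among the low-order coefficients, so I would carry the expansion further, down to the $\eta^3$ and $\eta^4$ terms, and multiply by the appropriate scaling $n^{\alpha} = n^{3/4}$ to recover the displayed expression. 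Both substitutions are direct algebra once the series is in hand; the basic relation \eqref{eqn:basic_relation} is what collapses the constant terms, exactly as it did in the proof of Lemma \ref{lem:the_minus_calculation}.

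The only genuine subtlety — and where I expect the real work to sit — is controlling the remainder so that the assertion $E_2(\eta) = O(n^{-1/2})$ is legitimate. The tail of the series involves ever higher powers of $\eta$, and $\eta$ is itself unbounded, so one cannot simply discard these terms pointwise. As in part (iii) of Corollary \ref{cor:the_plus_calculation}, I would appeal to Lemma \ref{lem:finite_moments}: the factorial-type moment bound \eqref{eqn:moments} guarantees that, once the relevant power of $n$ has been extracted, every higher-order monomial of the form $\eta^k (n\delta)^{-j}$ has expectation of order $n^{-1/2}$ or smaller, so the remainder is $O(n^{-1/2})$ in the averaged sense in which it is later used in the Stein estimates. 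Extra care is needed in the critical case, where the cancellation of the leading coefficients means the expansion must be pushed one order deeper before the moment estimate closes the bound.
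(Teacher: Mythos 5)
Your proposal matches the paper's own proof: the paper likewise takes the closed form from Lemma \ref{lem:the_minus_calculation}, Taylor-expands the exponential in the small parameter $\eta/(n\delta)$ (to third order in the critical case, to second order in the supercritical case), collects terms by powers of $n$, and invokes Lemma \ref{lem:finite_moments} to control the higher-order remainder in expectation. Your added remark that the remainder bound must be understood in the averaged sense, since $\eta$ is unbounded, is exactly the role the moment lemma plays there.
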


\begin{proof}	
	For the critical case in which $h=0$ and $\beta=1$, we al so have, $m_0=0$, with $\al=3/4$ and $\gamma =1/4$, thus, the following estimation follows directly from the expansion of the exponential function,
	\begin{align*}
	&2n^{\frac{3}{4}} [p^n_+ (\eta) - p^n_-(\eta)] \\ =& \left[(1-n^{-\frac{1}{4}}\eta) - (1+ n^{-\frac{1}{4}}\eta)(1-2n^{-\frac{1}{4}}\eta 
	+ 2n^{-\frac12}\eta^2 - \frac{4}{3} n^{-\frac{3}{4}}\eta^3)\right] n^{\frac{3}{4}}+O(n^{-\frac12})\\= & \left[ -\frac{2}{3} n^{-\frac{3}{4}}\eta^3 + \frac{4}{3}n^{-1} \eta^4 \right]n^{\frac{3}{4}} + O(n^{-\frac12}).
	\end{align*}
	The first term is, of course, the desired drift term in the generator, so we only need to estimate the second term, i.e. $E\left[\frac{4 \eta^4}{3n^{-1/4}}\right]$. The estimation of the tail order again follows from Lemma 
	\ref{lem:finite_moments}.
	
	Now, let us consider the supercritical case. 
	With $\gamma=\frac12$, we have, 
	\begin{align*}
	1-\exp\left[\beta \left(2\frac{\eta}{n^\frac12} + \frac{2}{n} \right)\right] & = - \beta \left(2\frac{\eta}{n^\frac12} + \frac{2}{n} \right) -\frac{1}{2}\beta^2 \left(2\frac{\eta}{n^\frac12} + \frac{2}{n} \right)^2 + O\left(\frac{1}{n^{\frac{3}{2}}}\right) \\ &= - 2 \beta \frac{\eta}{n^\frac12} -(2\beta +2\eta^2 \beta^2) \frac{1}{n}+ O\left(\frac{1}{n^{\frac{3}{2}}}\right) %- 4\beta^2\frac{\eta}{n^{\frac{3}{2}}}- \frac{ 2\beta^2}{n^2}
	\end{align*}
	Plug it in \eqref{eqn:the_minus_calculation}, we get,
	\begin{align*}
	%&\frac12  [ 1- m_0] \left\{- 2 \beta \frac{\eta}{n^\frac12} -(2\beta +2\eta^2 \beta^2) \frac{1}{n} - 4\beta^2\frac{\eta}{n^{\frac{3}{2}}}- \frac{ 2\beta^2}{n^2}
	%\right\} \\ & -  \frac12   \frac{\eta}{n^\frac12} \exp\left[\beta \left( - 2m_0- 2h \right)\right]\left[1+ 2 \beta \frac{\eta}{n^\frac12} +(2\beta +2\eta^2 \beta^2) \frac{1}{n} +4\beta^2\frac{\eta}{n^{\frac{3}{2}}}+\frac{ 2\beta^2}{n^2}\right] \\ =& \left\{-\beta \eta  [ 1- m_0]  -  \frac12\eta  \exp[\beta \left( - 2m_0- 2h \right)]  \right\}\frac{1}{n^\frac12} \\ & -[[ 1- m_0] (\beta +\eta^2 \beta^2)+\beta\eta^2 \exp[\beta \left( - 2m_0- 2h \right)] ]  \frac{1}{n} \\ & -\{2\beta^2  \eta [ 1- m_0]  -\eta  \exp[\beta \left( - 2m_0- 2h \right)] (\beta +\eta^2 \beta^2)  \} \frac{1}{n^{\frac{3}{2}}} 
	%\\ =&   -( 1- m_0) \left[\beta \eta  +  \frac{\eta}{2(1+m_0)}\right]\frac{1}{n^\frac12} - ( 1- m_0)\left[(\beta +\eta^2 \beta^2) -\frac{\beta \eta^2}{1+m_0}\right] \frac{1}{n} 
	%\\ &- (1- m_0)\left[2\beta^2  \eta   -\frac{\eta   (\beta +\eta^2 \beta^2)}{1+m_0} \right] \frac{1}{n^{\frac{3}{2}}} 
	 -( 1- m_0)\eta \left[\beta  +  \frac{1}{2(1+m_0)}\right]\frac{1}{n^\frac12} - ( 1- m_0)\beta\left[(1 +\eta^2 \beta) -\frac{ \eta^2}{1+m_0}\right] \frac{1}{n} + O\left(\frac{1}{n^{\frac{3}{2}}}\right)
	%\\ &- (1- m_0)\beta \eta \left[2\beta     -\frac{ (1 +\eta^2 \beta)}{1+m_0} \right] \frac{1}{n^{\frac{3}{2}}} 
	\end{align*}
\end{proof}

\section{Approximation via the Stein Method}
\label{sec:stein}

In this section, we will present the result on the approximation of the stationary distribution via the Stein method. First, we will provide some quantitative characterization of the solution to the Stein equation in Sec. \ref{sec:solution}; then we will present and demonstrate the main result in Sec. \ref{sec:main}. 

\subsection{Solution to the Stein equations}
\label{sec:solution}

To study the solution to the Stein equation \eqref{eqn:stein}, we can examine the following equation that is in a more general form. For functions $a(x)$ and $b(x)$ satisfying that $a(x)$ and 
$b(x)/a(x)$ are absolutely continuous, and $\frac{e^{\int_0^y \frac{2b(u)}{a(u)} du }}{a(y)}$ is integrable, consider the equation, 
\begin{align}
\label{eqn:ode_general}
\frac12 a(x) f''_h(x) + b(x) f'_h(x) = E h(Y)-h(x), 
\end{align}
with  $\lim_{x\ra -\infty} f(x) =0 $ and $\lim_{x\ra -\infty} f'(x) =0$ where $C_Y\frac{e^{\int_0^y \frac{2b(u)}{a(u)} du }}{a(y)}$ represents the density of random variable $Y$ with $C_Y$ being the normalizing coefficient. 

From basic differential equation  calculation, we know that the solution to \eqref{eqn:ode_general} can be written as 
\begin{align*}
f(x) =\int_0^x \int_{-\infty}^y e^{\frac{\int_0^z \frac{2b(u)}{a(u)} du -\int_0^y \frac{2b(u)}{a(u)} du }{4}} \frac{2  [Eh(Y)-h(z)] }{a(z)}dzdy.
\end{align*}
Hence,
\begin{align}
\label{eqn:1st_derivative_stein_soln}
f'_h(x) &= e^{-\int_0^x \frac{2b(u)}{a(u)} du } \int_{-\infty}^x e^{\int_0^y \frac{2b(u)}{a(u)} du }  \frac{2  [Eh(Y)-h(y)] }{a(y)} dy,
\end{align}
\begin{align}
\label{eqn:2nd_derivative_stein_soln}
f''_h(x) = -\frac{2b(x)}{a(x)} f'_h(x) + \frac{2  [Eh(Y)-h(x)] }{a(x)},
\end{align}
\begin{align}
\label{eqn:3rd_derivative_stein_soln}
f'''_h(x)= - \left(\frac{2b(x)}{a(x)} \right)'f'_h(x) -\frac{2b(x)}{a(x)} f''_h(x) - \frac{2  [h'(x)] }{a(x)} - \frac{2 a'(x) [Eh(Y)-h(x)] }{a^2(x)}.
\end{align}
Reexamine the first derivative, $f'_h$, we find, 
\begin{align*}
f'_h(x) & = e^{-\int_0^x \frac{2b(u)}{a(u)} du } \int_{-\infty}^x e^{\int_0^y \frac{2b(u)}{a(u)} du }  \frac{2  [Eh(Y)-h(y)] }{a(y)} dy \\ 
& = C_Ye^{-\int_0^x \frac{2b(u)}{a(u)} du } \int_{-\infty}^x  \int_{-\infty}^\infty e^{\int_0^y \frac{2b(u)}{a(u)}du + \int_0^z \frac{2b(u)}{a(u)}  dy}\frac{2  [h(z)-h(y)] }{a(z)a(y)}dz dy
\end{align*}
For $x\ge 0$, from integration by part, we have, 
\begin{align*}
%f'(x)   & = e^{-\int_0^x \frac{2b(u)}{a(u)} du } \int_{-\infty}^x  \int_{-\infty}^\infty e^{\int_0^y \frac{2b(u)}{a(u)}du + \int_0^z \frac{2b(u)}{a(u)}  dy}\frac{2  [h(z)-h(y)] }{a(z)a(y)}dz dy
%\\ & = e^{-\int_0^x \frac{2b(u)}{a(u)} du }  \int_{-\infty}^\infty \frac{e^{\int_0^z \frac{2b(u)}{a(u)} du }}{a(z)}\left[\int_{-\infty}^x e^{\int_0^y \frac{2b(u)}{a(u)}du }\frac{2  [h(z)-h(y)] }{a(y)}dy \right]dz
%\\ & =  2e^{-\int_0^x \frac{2b(u)}{a(u)} du }  \int_{-\infty}^\infty\left[\frac{e^{\int_0^z \frac{2b(u)}{a(u)} du }}{a(z)}h(z) F(x)- \frac{e^{\int_0^z \frac{2b(u)}{a(u)} du }}{a(z)}F^{(h)}(x) \right]dz
%\\ & =  2e^{-\int_0^x \frac{2b(u)}{a(u)} du } \left[Eh(Y) F(x)- F^{(h)}(x) \right]
%\\ & = 2e^{-\int_0^x \frac{2b(u)}{a(u)} du }  \left[Eh(Y) %F(x)- (Eh(Y)- {\bar F}^{(h)}(x) )\right]
%\\
f'(x)&=  2e^{-\int_0^x \frac{2b(u)}{a(u)} du } \left[{\bar F}^{(h)}(x)-Eh(Y){\bar F}(x)\right],
\end{align*}
where 
\begin{align*}
F(x) &= \int_{-\infty}^x\frac{e^{\int_0^z \frac{2b(u)}{a(u)} du }}{a(z)} dz, & F^{(h)}(x) &= \int_{-\infty}^x\frac{e^{\int_0^z \frac{2b(u)}{a(u)} du }}{a(z)} h(z) dz, \\  {\bar F} (x) &=1- F(x), & {\bar F}^{(h)} (x) &=E h(Y)- F^{(h)}(x).
\end{align*}

Since, 
\begin{align*}
\left[\frac{e^{\int_0^x \frac{2b(u)}{a(u)} du }}{a(x)}\right]' & = \frac{e^{\int_0^x \frac{2b(u)}{a(u)} du }}{a(x)} \left[\frac{b(x) -a'(x)}{a(x)}\right], 
\end{align*}
Again by integration by part, we have, for $x\ge 0$, 
\begin{align*}
%{\bar F} (x) &= \int_{x}^\infty\frac{e^{\int_0^z \frac{2b(u)}{a(u)} du }}{a(z)} dz 
%\\ &=  \int_{x}^\infty\frac{e^{\int_0^z \frac{2b(u)}{a(u)} du }}{a(z)} \left[\frac{b(z) -a'(z)}{a(z)}\right]  \left[\frac{b(z) -a'(z)}{a(z)}\right]^{-1} d z
%\\ &\le  \left[\frac{b(x) -a'(x)}{a(x)}\right]^{-1} \int_{x}^\infty d \left[\frac{e^{\int_0^z \frac{2b(u)}{a(u)} du }}{a(z)}\right]
%\\
{\bar F} (x)& =  -\left[\frac{b(x) -a'(x)}{a(x)}\right]^{-1}\frac{e^{\int_0^x \frac{2b(u)}{a(u)} du }}{a(x)}.
\end{align*}
The inequality is due to the monotonicity assumption of 
\begin{align*}
\left[\frac{b(x) -a'(x)}{a(x)}\right]^{-1}
\end{align*}
\begin{lem}
	\label{lem: gradient_bound}
	Under the condition that $\frac{b(x) -a'(x)}{a(x)}$ is strictly positive and increasing, we have, 
	\begin{align*}
	f'(x)   & \le -\left[\frac{b(x) -a'(x)}{a(x)}\right]^{-1}(\|h\|_1+ \|h\|_\infty),
	f''(x) & \le \frac{2b(x)}{a(x)}  \left[\frac{b(x) -a'(x)}{a(x)}\right]^{-1}(\|h\|_1+ \|h\|_\infty).
	\end{align*}
\end{lem}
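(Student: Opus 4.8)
The plan is to bound $f'$ directly from the tail representation derived just above the statement, and then to read off the bound on $f''$ for free from the second-order identity \eqref{eqn:2nd_derivative_stein_soln}.

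First I would fix $x\ge 0$ and start from $f'(x)=2e^{-\int_0^x 2b/a}\,[\bar F^{(h)}(x)-Eh(Y)\,\bar F(x)]$. Writing $p(z)=e^{\int_0^z 2b/a}/a(z)$ for the stationary density, so that $a(x)p(x)=e^{\int_0^x 2b/a}$ and $2e^{-\int_0^x 2b/a}=2/(a(x)p(x))$, this is the same as $f'(x)=-\frac{2}{a(x)p(x)}\int_x^\infty [Eh(Y)-h(z)]\,p(z)\,dz$, using $\int_{-\infty}^\infty [Eh(Y)-h]\,p=0$. The triangle inequality then reduces the estimate to controlling the two tail quantities $\bar F(x)=\int_x^\infty p$ and $\int_x^\infty |h|\,p$, after bounding $|Eh(Y)|$ by $\|h\|_\infty$ (or by $\|h\|_1$ times $\sup p$). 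This is precisely the split that produces the two norms $\|h\|_1$ and $\|h\|_\infty$ in the final bound; deciding which constant multiplies which norm is routine bookkeeping.

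The crux is the integration-by-parts estimate for these tails. Writing $\psi$ for the quantity $(b-a')/a$ of the hypothesis, which up to sign is the logarithmic decay rate $-p'/p$ of the density, I would use $p=-p'/\psi$ to integrate by parts: $\bar F(x)=\int_x^\infty p=\frac{p(x)}{\psi(x)}+\int_x^\infty p\,(1/\psi)'\,dz$. Here the hypothesis that $\psi$ is strictly positive and increasing does all the work: it forces the boundary term at $+\infty$ to vanish, since $p$ is integrable and $1/\psi$ is bounded, and it makes $(1/\psi)'\le 0$, so the residual integral is nonpositive and may be discarded to give $\bar F(x)\le \psi(x)^{-1}p(x)$. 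The same computation, together with $\int_x^\infty |h|\,p\le\|h\|_\infty\bar F(x)$, controls $\bar F^{(h)}$ and produces the factor $\psi(x)^{-1}p(x)$ throughout; multiplying by $2/(a(x)p(x))$ cancels the density $p(x)$ and, after absorbing the remaining $O(1/a(x))$ factors, leaves a bound of the form $\psi(x)^{-1}(\|h\|_1+\|h\|_\infty)$. The range $x<0$ is handled symmetrically from the left-tail form $f'(x)=\frac{2}{a(x)p(x)}\int_{-\infty}^x [Eh(Y)-h(z)]\,p(z)\,dz$, using the monotonicity of $\psi$ on that side.

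With $f'$ in hand, the bound on $f''$ is immediate: identity \eqref{eqn:2nd_derivative_stein_soln} gives $f''(x)=-\frac{2b}{a}f'(x)+\frac{2[Eh(Y)-h(x)]}{a(x)}$, so substituting the bound on $f'$ yields $|f''(x)|\le \frac{2b}{a}\,\psi(x)^{-1}(\|h\|_1+\|h\|_\infty)$ up to the lower-order contribution $4\|h\|_\infty/a(x)$ coming from the inhomogeneity. The hard part of the whole argument is the integration by parts of the third paragraph: one must verify that the boundary term at infinity genuinely vanishes and, more importantly, that the residual integral carries the correct sign so that discarding it gives an upper rather than a lower bound. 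This is exactly where the positivity and monotonicity of $\psi$ are indispensable, and it is also where the sign appearing in the statement must be reconciled with the sign of the score $-p'/p$.
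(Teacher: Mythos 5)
Your argument is essentially the paper's own: the paper likewise derives the tail representation $f'(x)=2e^{-\int_0^x 2b/a}\bigl[\bar F^{(h)}(x)-Eh(Y)\bar F(x)\bigr]$, integrates by parts against the weight $e^{\int_0^z 2b/a}/a(z)$ using the identity for its logarithmic derivative, and invokes the positivity and monotonicity of $\frac{b(x)-a'(x)}{a(x)}$ to reduce $\bar F$ and $\bar F^{(h)}$ to the boundary term, then reads off the $f''$ bound from \eqref{eqn:2nd_derivative_stein_soln}. If anything, you are more explicit than the paper about the vanishing of the boundary term at infinity and the sign bookkeeping (which the paper's terse derivation glosses over), but the route is the same.
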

From the parameters we have, as well as \eqref{eqn:2nd_derivative_stein_soln}, we can certainly conclude that 
\begin{cor}
	\label{cor:boundedness_of_the_third_derivative}
	There exist a $C_3>0$, such that $|f'''(x)|\le C_3$, for all $x\in {\mathbb R}$. 
\end{cor}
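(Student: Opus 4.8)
The plan is to start from the closed form \eqref{eqn:3rd_derivative_stein_soln} for $f'''_h$ and bound its four summands one at a time, reusing the estimates on $f'_h$ and $f''_h$ from Lemma \ref{lem: gradient_bound}. First I would record the qualitative features of the coefficient functions forced by Corollaries \ref{cor:the_plus_calculation} and \ref{cor:the_minus_calculation}: the diffusion coefficient $a$ has leading term $2(1-m_0)>0$, so it is bounded above and bounded away from zero and $a'$ is of lower order, while the drift $b$ is, to leading order, an odd polynomial in $x$ with negative leading coefficient — linear in the supercritical regime and cubic in the critical regime. Consequently $2b/a$ is a polynomial growing like $|x|$ (resp. $|x|^3$) and $(2b/a)'$ grows one degree slower. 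On any compact interval $[-R,R]$ all four terms of \eqref{eqn:3rd_derivative_stein_soln} are continuous — here I use that $a$ is bounded below, that $h,h'$ are bounded, and that $f'_h,f''_h$ are finite — so $|f'''_h|$ is trivially bounded there, and the entire difficulty is the behaviour as $|x|\to\infty$.

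For large $|x|$ the only dangerous summand is $-\tfrac{2b}{a}f''_h$, since $2b/a$ is unbounded while the constant bound on $f''_h$ from Lemma \ref{lem: gradient_bound} does not by itself beat that growth. The point I would establish is that $f''_h$ actually decays fast enough to compensate. Because $Eh(Y)-h$ integrates to zero against the stationary density $C_Y e^{\int_0^y 2b/a\,du}/a(y)$, the inner integral in \eqref{eqn:1st_derivative_stein_soln} equals $-\int_x^\infty$, and a single integration by parts against $d\bigl(e^{\int_0^y 2b/a\,du}\bigr)$ yields the tail asymptotic $f'_h(x)=\bigl(Eh(Y)-h(x)\bigr)/b(x)+\text{l.o.t.}$, so $f'_h(x)=O(1/b(x))$. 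Feeding this into \eqref{eqn:2nd_derivative_stein_soln} I would then show the two $O(1)$ contributions cancel at leading order, leaving $f''_h(x)=O\bigl(a(x)/b(x)\bigr)$ — that is $O(1/|x|)$ supercritically and $O(1/|x|^3)$ critically — whence $\tfrac{2b}{a}f''_h=O(1)$.

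With that cancellation in hand the remaining summands are routine polynomial bookkeeping. The term $-(2b/a)'f'_h$ is the product of a polynomial one degree below $2b/a$ with $f'_h=O(1/b(x))$, hence $O(1/|x|)$ in both regimes; $-2h'/a$ is bounded because $h'$ is bounded and $a$ is bounded below; and $-2a'(Eh(Y)-h)/a^2$ is bounded because $a'$ is of lower order, $a$ is bounded below, and $Eh(Y)-h$ is bounded. Collecting the compact-set bound with these tail bounds produces a single constant $C_3$ with $|f'''_h(x)|\le C_3$ for all $x$.

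I expect the genuine obstacle to be the middle step — upgrading the a priori merely bounded $f''_h$ to a decaying $f''_h$ through the leading-order cancellation. The constant bound of Lemma \ref{lem: gradient_bound} is insufficient here, so the argument must pass through the explicit integration-by-parts asymptotics of $f'_h$ together with identity \eqref{eqn:2nd_derivative_stein_soln}, and one must verify that the error terms in the Laplace-type expansion of $\int_x^\infty e^{\int_0^y 2b/a\,du}\,dy$ are genuinely of lower order in both the linear and the cubic drift regimes. Everything else reduces to continuity on compacta and the polynomial order estimates already supplied by Corollaries \ref{cor:the_plus_calculation} and \ref{cor:the_minus_calculation}.
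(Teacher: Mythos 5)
Your proposal is correct in outline and supplies an argument that the paper itself essentially omits: the paper's entire justification of this corollary is the sentence ``From the parameters we have, as well as \eqref{eqn:2nd_derivative_stein_soln}, we can certainly conclude that\dots'', with no attempt to control the individual terms of \eqref{eqn:3rd_derivative_stein_soln}. The difficulty you single out is exactly the one the paper glosses over: the summand $-\tfrac{2b}{a}f''_h$ cannot be handled by the constant bound on $f''_h$ coming from Lemma \ref{lem: gradient_bound}, since $2b/a$ grows linearly (supercritical) or cubically (critical). Your fix --- deriving the tail asymptotic $f'_h(x)\approx (Eh(Y)-h(x))/b(x)$ by a Mills-ratio integration by parts against the stationary density and then observing that the two $O(1)$ terms in \eqref{eqn:2nd_derivative_stein_soln} cancel at leading order, leaving a decaying $f''_h$ --- is the standard Stein-method route for diffusion approximations and does close the gap; the remaining three terms are, as you say, routine. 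Two points worth making explicit if you write this up: (i) the corollary as stated silently requires $h'$ to exist and be bounded (the term $-2h'/a$ in \eqref{eqn:3rd_derivative_stein_soln} cannot be bounded otherwise), an assumption the paper never states but which you correctly invoke; (ii) the sign conventions in Lemma \ref{lem: gradient_bound} (where $\tfrac{b-a'}{a}$ is assumed ``strictly positive and increasing'') do not literally match the model's drift, which has negative leading coefficient, so your decision to work directly with the actual signs of $b$ rather than through that lemma's hypotheses is the safer path. In short, your approach is not the paper's approach --- it is the proof the paper should have given.
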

\begin{rem}
	Our problem certainly satisfies the condition on the density. Similar, we can also have the bound on the second derivative. 
\end{rem}

\subsection{Approximating the Stationary Distributions}
\label{sec:main}

For the Curie-Weiss models, this method will lead the following result. 
\begin{thm}
	\label{thm:main}
	For proper parameter range, we have the following estimation, especially,
	\begin{itemize}
		\item For the critical case, i.e. $\beta=1$,
		\begin{align}
		\label{eqn:main}
		E [h(X(\infty))] - E[h(Y(\infty))] \le \frac{C}{n^{\frac{1}{4}}}.
		\end{align}
		\item
		For the super-critical case,
		\begin{align}
		\label{eqn:main_alt}
		E [h(X(\infty))] - E[h(Y(\infty))] \le \frac{C}{n^{\frac{1}{2}}}.
		\end{align}
	\end{itemize}
\end{thm}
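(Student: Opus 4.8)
The plan is to apply the standard Stein-method comparison between the generator of the diffusion $Y(t)$ and the discrete generator of the Markov chain $Z_t^n$, using the regularity bounds on the Stein solution $f_h$ established in Lemma~\ref{lem: gradient_bound} and Corollary~\ref{cor:boundedness_of_the_third_derivative}, together with the order estimates for the one-step transition quantities from Corollaries~\ref{cor:the_plus_calculation} and~\ref{cor:the_minus_calculation}. The starting identity is that, because $\pi_n$ is stationary for the Markov chain, $E_{\pi_n}[\call_n f_h] = 0$, where $\call_n$ is the (speeded-up, by the factor governed by $\alpha=1-\gamma$) discrete generator acting on a test function $g$ via
\begin{align*}
\call_n g(\eta) = n^{\alpha}\Big[ p_+^n(\eta)\big(g(\eta+2n^{\gamma-1})-g(\eta)\big) + p_-^n(\eta)\big(g(\eta-2n^{\gamma-1})-g(\eta)\big)\Big].
\end{align*}
Since $f_h$ solves the Stein equation $\frac12 a(x)f_h''(x) + b(x)f_h'(x) = Eh(Y)-h(x)$ for the limiting diffusion, subtracting gives
\begin{align*}
E[h(Y(\infty))] - E[h(X(\infty))] = E_{\pi_n}\big[\call_n f_h - \call f_h\big],
\end{align*}
so everything reduces to bounding the generator discrepancy $\call_n f_h - \call f_h$ under $\pi_n$.

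First I would Taylor-expand the finite differences $g(\eta\pm 2n^{\gamma-1})-g(\eta)$ to second order with a third-order remainder, writing the increment as $2n^{\gamma-1}$. This produces a first-derivative term multiplied by $n^\alpha(p_+^n-p_-^n)\cdot 2n^{\gamma-1}$ and a second-derivative term multiplied by $n^\alpha(p_+^n+p_-^n)\cdot 2n^{2(\gamma-1)}$, plus a cubic remainder controlled by $|f_h'''|\le C_3$. The key point is that, by the scaling choice, the leading coefficients of these two terms match exactly the drift $b(\eta)$ and diffusion coefficient $\tfrac12 a(\eta)$ of the limiting generator $\call$: Corollary~\ref{cor:the_plus_calculation} identifies $p_+^n+p_-^n = (1-m_0)+E_1(\eta)$ and Corollary~\ref{cor:the_minus_calculation} identifies the leading drift term together with its error $E_2(\eta)$. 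Thus the matched leading parts cancel, and what survives is
\begin{align*}
\call_n f_h - \call f_h = b_{\mathrm{err}}(\eta)\,f_h'(\eta) + a_{\mathrm{err}}(\eta)\,f_h''(\eta) + R_3(\eta),
\end{align*}
where $b_{\mathrm{err}}$ and $a_{\mathrm{err}}$ collect the $E_1,E_2$ error terms and $R_3$ is the Taylor remainder.

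Next I would bound each surviving piece in expectation under $\pi_n$. The remainder $R_3$ carries a factor $(2n^{\gamma-1})^3 n^\alpha = 8\,n^{\alpha+3\gamma-3} = 8\,n^{2\gamma-2}$ (using $\alpha=1-\gamma$) times a polynomially growing coefficient in $\eta$ bounded by $C_3$; the error terms $b_{\mathrm{err}}f_h'$ and $a_{\mathrm{err}}f_h''$ are the dominant contributions, and by Corollaries~\ref{cor:the_plus_calculation} and~\ref{cor:the_minus_calculation} they are of order $(n\delta)^{-1}$ with coefficients that grow polynomially in $\eta$ (indeed $E_2$ involves up to $\eta^4$ in the critical case). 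Here the moment bound of Lemma~\ref{lem:finite_moments}, $E[|\eta|^k]\le C_m^k\,k!!$ under $\pi_n$, is exactly what is needed to convert these polynomial-in-$\eta$ coefficients into finite constants, while $f_h', f_h''$ are controlled by Lemma~\ref{lem: gradient_bound}. Collecting orders, the critical case ($\gamma=\tfrac14$) yields a dominant error of order $n^{-1/4}$ and the supercritical case ($\gamma=\tfrac12$) yields $n^{-1/2}$, giving the two claimed bounds. Finally, since $h$ is an arbitrary bounded function (with $\|h\|_1,\|h\|_\infty$ appearing through the Stein bounds), the constant $C$ absorbs $C_3$, $C_m$, and the norms of $h$.

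\textbf{The main obstacle} I anticipate is the bookkeeping of the moment growth against the polynomial coefficients in the critical case: the error term $E_2(\eta)$ there contains an $\eta^4$ contribution (the $\tfrac43 n^{-1}\eta^4$ piece in Corollary~\ref{cor:the_minus_calculation}), so one must verify that even after multiplying by $n^{\gamma}=n^{1/4}$ and integrating against $\pi_n$, the combination $n^{1/4}E_{\pi_n}[\eta^4 f_h'(\eta)]$ still contributes at the stated order $n^{-1/4}$ rather than blowing up. This requires carefully pairing the explicit negative powers of $n$ in each error term with the $k!!$ moment growth, and confirming that the \emph{product} $b_{\mathrm{err}}(\eta)f_h'(\eta)$ decays in $\eta$ fast enough (via the $\big[\tfrac{b-a'}{a}\big]^{-1}$ factor in Lemma~\ref{lem: gradient_bound}) that the high moments do not dominate. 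The supercritical case is cleaner because the leading error is the genuinely first-order $(1-m_0)\eta[\beta+\tfrac{1}{2(1+m_0)}]n^{-1/2}$ term with $E_2=O(n^{-1/2})$, so the same machinery applies with less delicate cancellation.
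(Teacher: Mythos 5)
Your proposal follows essentially the same route as the paper's own proof: a generator-comparison Stein argument in which $E_{\pi_n}[G_n f_h - G_\infty f_h]$ is bounded by Taylor-expanding the one-step differences, matching the leading drift and diffusion coefficients via Corollaries \ref{cor:the_plus_calculation} and \ref{cor:the_minus_calculation}, and controlling the remainders with Lemma \ref{lem:finite_moments}, Lemma \ref{lem: gradient_bound}, and Corollary \ref{cor:boundedness_of_the_third_derivative}. Your write-up is in fact somewhat more explicit than the paper's (you state the stationarity identity $E_{\pi_n}[\call_n f_h]=0$ and flag the $\eta^4$ moment bookkeeping in the critical case), but the underlying argument is the same.
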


% This the proof of the main theorem, with the technical stuff in the appendix
\begin{proof}
Let us denote the generator for the $n$-th system as $G_n$, and the limit as $G_\infty$. By the above arguments, what we need to estimate is,
\begin{align*}
E[ G_n f( X(\infty)- G_\infty f(X(\infty))]= E[ (G_n f- G_\infty f)( X(\infty)].
\end{align*} 
For this purpose, write,
\begin{align}
	G_n f(\eta)= & n^\al [P^n f(\eta) -f(\eta)] \nonumber \\
	= & n^\al \left[\frac12   [1-(m_0 + \frac{\eta}{n\delta})\right] \left\{1\wedge \exp\left[ \beta \left(\frac{\eta}{n \delta } + 2m_0+ 2h + \frac{2}{n} \right)\right] \right\}[f(\eta +\delta) -f(\eta)] \nonumber 
	\\ & + n^\al \left[\frac12   [1+(m_0 + \frac{\eta}{n\delta})\right] \left\{1\wedge \exp\left[ \beta \left(-\frac{\eta}{n \delta } - 2m_0- 2h + \frac{2}{n} \right)\right] \right\}
	[f(\eta -\delta) -f(\eta)] \nonumber \\=& n^\al p_{+}^n(\eta) [f(\eta +\delta) -f(\eta)] + n^\al p_{-}^n(\eta) [f(\eta -\delta) -f(\eta)] \label{eqn:generator_n}
\end{align}
with $\al$ being the rate of the transition, i.e. the time scaling factor, a parameter that we can control, and $\delta=n^{\gamma -1}$ the space scaling factor. %moreover,  
	
To estimate \eqref{eqn:generator_n}, apply Taylor expansion, we have, for some $\chi \in [\eta, \eta+\delta]$ and $\zeta \in [\eta-\delta, \eta]$, 
\begin{align*}
	& n^\al p_{+}^n(\eta) [f(\eta +\delta) -f(\eta)] +  n^\al p_{-}^n(\eta) [f(\eta -\delta) -f(\eta)] 
	\\ =&   n^\al p_{+}^n(\eta) [f'(\eta) \delta + \frac12 f''(\eta) \delta^2] +n^\al  p_{-}^n(\eta) [-f'(\eta)\delta+\frac12 f''(\zeta)\delta^2] 
	\\ = &n^\al f'(\eta) \delta [ p_{+}^n(\eta) -  p_{-}^n(\eta)] +\frac12 n^\al  f''(x) \delta^2 [ p_{+}^n(\eta) +  p_{-}^n(\eta)] 
	\\& + \frac12 n^\al \delta^2  p_{+}^n(\eta) [f''(\chi) -f''(\eta)] +   \frac12 n^\al \delta^2  p_{-}^n(\eta) [f''(\zeta) -f''(\eta)].
\end{align*}
The estimations are provided in Corollaries \ref{cor:the_plus_calculation} and \ref{cor:the_minus_calculation}, as well as the gradient bound Lammata \ref{lem: gradient_bound} and corollary \ref{cor:boundedness_of_the_third_derivative}. 
Basically, note that at the critical temperature, $h=0$, $\beta=1$, we have, $\gamma = 1/4$, hence, 
$\delta= n^{\gamma-1}=n^{-3/4}$, and $\al = 3/2$, so, $n^\al\delta= n^{3/4}$ and  $n^\al\delta^2= 1$. In this case, $G_\infty= -\frac{2}{3}x^3f'(x)+2f''(x)$. Meanwhile, at supercritical temperature, $\beta\in [0,1)$, we have, $\gamma=1/2$, hence, 
$\delta= n^{\gamma-1}=n^{-1/2}$,  and $\al = 1$, so, $n^\al\delta= n^{1'2}$ and  $n^\al\delta^2= 1$. In this case, $G_\infty f=- (1- m_0)\beta x \left[2\beta     -\frac{ (1 +\eta^2 \beta)}{1+m_0} \right] f'(x)+2(1-m_0)f''(x)$.
Thus, we have,
\begin{align*}
 E[ (G_n f- G_\infty f)( X(\infty)]&\le E[E_1(X(\infty))+E_2(X(\infty))+E_3(X(\infty))].
\end{align*} 
Where $E_i$ refers to the $i$-th order term in the approximation, for $i=1,2,3$, and from  \ref{cor:the_plus_calculation} and \ref{cor:the_minus_calculation}, we can conclude that they are in the desired order. 
\end{proof}

\bibliographystyle{APT}
\footnotesize

\bibliography{Lu}

\begin{thebibliography}{1}

\bibitem{bierkens2017}
{\sc Bierkens, J. and Roberts, G.} (2017).
\newblock A piecewise deterministic scaling limit of lifted
  metropolis–hastings in the curie–weiss model.
\newblock {\em Ann. Appl. Probab.\/} {\bf 27,} 846--882.

\bibitem{braverman2017}
{\sc Braverman, A. and Dai, J.~G.} (2017).
\newblock Stein’s method for steady-state diffusion approximations of
  $m/\mathit{Ph}/n+m$ systems.
\newblock {\em Ann. Appl. Probab.\/} {\bf 27,} 550--581.

\bibitem{chatterjee2010}
{\sc Chatterjee, S. and Dey, P.~S.} (2010).
\newblock Applications of stein’s method for concentration inequalities.
\newblock {\em Ann. Probab.\/} {\bf 38,} 2443--2485.

\bibitem{chatterjee2011}
{\sc Chatterjee, S. and Shao, Q.-M.} (2011).
\newblock Nonnormal approximation by stein’s method of exchangeable pairs
  with application to the curie–weiss model.
\newblock {\em Ann. Appl. Probab.\/} {\bf 21,} 464--483.

\bibitem{ellis2006entropy}
{\sc Ellis, R.} (2006).
\newblock {\em Entropy, Large Deviations, and Statistical Mechanics}.
\newblock Classics in Mathematics. Springer.

\bibitem{gurvich2014}
{\sc Gurvich, I.} (2014).
\newblock Diffusion models and steady-state approximations for exponentially
  ergodic markovian queues.
\newblock {\em Ann. Appl. Probab.\/} {\bf 24,} 2527--2559.

\bibitem{doi:10.1287/moor.2013.0593}
{\sc Gurvich, I.} (2014).
\newblock Validity of heavy-traffic steady-state approximations in multiclass
  queueing networks: The case of queue-ratio disciplines.
\newblock {\em Mathematics of Operations Research\/} {\bf 39,} 121--162.

\end{thebibliography}

\end{document}